\newtheorem{defin}{}
\newtheorem{saetze}[defin]{}
\newtheorem{conjec}[defin]{}
\newtheorem{lemmas}[defin]{}
\newtheorem{folger}[defin]{}
\newtheorem{bemerk}[defin]{}
\newenvironment{theorem}  {\begin{saetze}\it {\bf Theorem:}}{\end{saetze}}
\newenvironment{conj}     {\begin{conjec}\it {\bf Conjecture:}}{\end{conjec}}
\newenvironment{proof}    {{\it Proof}:}{{\hfill \fillbox \bigskip}}
\newcommand{\fillbox}{\mbox{$\bullet$}}
\newcommand{\bulit}{\item[$\bullet$]}
\newenvironment{items}{\begin{list}{$\alph{item})$}
{\labelwidth18pt \leftmargin18pt \topsep3pt \itemsep1pt \parsep0pt}}
{\end{list}}
\newcommand{\ra}{\rightarrow}
\newcommand{\ms}{\mapsto}
\newcommand{\N}{\mathbb N}
\newcommand{\F}{\mathbb F}
\newcommand{\ad}{\mathrm{ad}}
\begin{document}

\title{Graded Lie algebras of Cartan type \\ in characteristic 2}
\author{Tara Brough and Bettina Eick}
\date{\today}
\maketitle

\begin{abstract}
We investigate the graded Lie algebras of Cartan type $W, S$ and $H$ in
characteristic $2$ and determine their simple constituents and some 
exceptional isomorphisms between them. We also consider the graded Lie
algebras of Cartan type $K$ in characteristic $2$ and conjecture that
their simple constituents are isomorphic to Lie algebras
of type $H$.
\end{abstract}

%%%%%%%%%%%%%%%%%%%%%%%%%%%%%%%%%%%%%%%%%%%%%%%%%%%%%%%%%%%%%%%%%%%%%%%%%%%%%
\section{Introduction}

The simple Lie algebras over the complex numbers were first classified
by Killing (1888) and Cartan (1894). They fall in four infinite families
$A_n$, $B_n$, $C_n$ and $D_n$ and five exceptional cases $E_6, E_7, E_8,
G_2$ and $F_4$. All of these Lie algebras have analogues in the modular 
case and thus can be defined over arbitrary fields of characteristic $p$.
The resulting Lie algebras are called {\em classical} Lie algebras and 
they are simple for $p \geq 5$. We refer to \cite[Chapter 4]{Str04} for
background.

Over fields of characteristic $p$ there are also non-classical simple
Lie algebras known. There are the four infinite families of {\em graded
Cartan type}: the Witt algebras $W,$ the special algebras $S,$ the 
Hamiltonian algebras $H$ and the contact algebras $K$. The Lie algebras 
of {\em (general) Cartan type} are deformations of these. Additionally, for
$p = 5$ there exists the family of Melikian type Lie algebras. The 
classification of the simple Lie algebras over algebraically closed fields 
of characteristic $p \geq 5$ has been completed around the beginning of 
this century by Strade and Premet. It asserts that every simple Lie algebra 
over an algebraically closed field of characteristic $p$ is either 
classical or of Cartan or Melikian type. We refer to \cite{PSt06} and 
\cite{Str04} for background.

Over fields of characteristic $p$ with $p \in \{2,3\}$ the classification 
of simple Lie algebras is wide open. In particular for $p = 2$ it seems
that many new phenomena arise and the classification will differ significantly
from those in characteristic $0$ and $p \geq 5$. The classical Lie algebras
and the Lie algebras of Cartan type have analogues in characteristic $2$,
but these are not necessarily simple. The constituents of the classical Lie 
algebras in characteristic $2$ have been determined independently by 
Hogeweij \cite{Hog82} and Hiss \cite{His84}. On the other hand various
new infinite series of simple Lie algebras in characteristic $2$ have
been discovered, see for example \cite{Kap82}, \cite{Lin92}, \cite{Lin93}, 
\cite{Lin88} or \cite{Bro95}. 

In the main part of this note we consider the graded Lie algebras of
Cartan types $W, S$ and $H$ in characteristic $2$. We determine their
simple constituents and exhibit some exceptional isomorphisms between
them. As a result, we obtain that the following list of simple Lie algebras
in characteristic $2$ contains the simple constituents of all graded Lie 
algebras of Cartan types $W, S$ and $H$ up to isomorphism. 

\begin{figure}[htb]
\begin{center}
\begin{tabular}{llll}
algebra & parameters & dimension & notes \\
\hline
$W(n, m)$ & $n > 1$ 
    & $n 2^{m_1 + \ldots + m_n}$ 
    & Theorem 2.4 in \cite[Chap 4]{SFa88} \\ 
$W(1, (l))'$ & $l > 1$ 
    & $2^{l-1}$
    & Theorem \ref{witt} below \\
$S(n, m)$ & $n > 2$ 
    & $(n-1) (2^{m_1 + \ldots m_n} - 1)$
    & Theorem 3.5 in \cite[Chap 4]{SFa88} \\
$S(2, m)'$ & $1 \not \in m$ 
    & $2^{m_1 + m_2} - 2$ 
    & Theorem \ref{special} below \\
$H(n, m)$ & $n > 3$ even 
    & $2^{m_1 + \ldots + m_n} - 2$ 
    & Theorem \ref{hamilton} below \\
\hline
\end{tabular}
\end{center}
\caption{Simple Lie algebras}
\end{figure}

There are further isomorphisms among the Witt algebras, the special algebras and
the Hamiltonian algebras known. For example, two Witt algebras $W(n, m)$ 
and $W(n, m')$ are isomorphic if $m$ is a permutation of $m'$. Moreover, 
in characteristic $2$ there are further isomorphisms possible. For example, 
computations based on the computer algebra system GAP \cite{Gap} show that 
$H(4, (1,1,1,1)) \cong S(3, (1,1,1))$ and $H(4, (2,1,1,1)) \cong 
S(3, (2,1,1))$. This induces the following conjecture.

\begin{conj}
$H(4, (m_1, \ldots, m_3, 1)) \cong S(3, (m_1, \ldots, m_3))$ in
characteristic $2$.
\end{conj}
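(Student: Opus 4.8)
The plan is to prove the isomorphism by producing an explicit linear bijection $\Phi\colon H(4,(m_1,m_2,m_3,1)) \to S(3,(m_1,m_2,m_3))$ and verifying that it preserves the bracket. First I would fix the divided power algebra $\mathcal O := \mathcal O(3,m)$ in the variables $x_1,x_2,x_3$ and exploit that $m_4=1$ forces $x_4^{(2)}=0$, so that the function algebra underlying $H(4,(m,1))$ splits as $\mathcal O \oplus x_4\mathcal O$. Writing a Hamiltonian function as $f = g + x_4 h$ with $g,h\in\mathcal O$ then identifies the underlying space of $H(4,(m,1))$ (modulo constants, and after the standard passage to the derived subalgebra that removes two dimensions) with $\mathcal O\oplus\mathcal O$. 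Since both sides of the conjecture already have dimension $2\cdot 2^{m_1+m_2+m_3}-2$, the task becomes to realise $S(3,m)$ on this same space $\mathcal O\oplus\mathcal O$ and to show that the two brackets coincide.

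For the target I would use the divergence-free (``curl'') description of $S(3,m)$: in characteristic $2$ a divergence-free vector field is, modulo the low-dimensional cohomology that is in any case removed when passing to the simple constituent, of the form $\mathrm{curl}(F)$ for a potential $F=(F_1,F_2,F_3)$, with gauge freedom given by gradients. Parametrising $S(3,m)$ by two independent potential components again yields the space $\mathcal O\oplus\mathcal O$, and $\Phi$ is defined to be the identity under the two identifications. Concretely $\Phi$ sends the Hamiltonian field $D_H(g+x_4 h)$ to the divergence-free field whose potential is built linearly from $g$, $h$ and their first derivatives $\partial_1,\partial_2,\partial_3$. Because this formula uses only multiplication and the $\partial_i$, it is natural in $\mathcal O$ and so makes sense, and is a bijection, uniformly in $m$.

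The core of the proof is the bracket identity $\Phi\{f_1,f_2\} = [\Phi f_1,\Phi f_2]$, comparing the Poisson bracket on the left with the Witt bracket on the right. Since $\Phi$ and both brackets are built from the divided-power Leibniz rule, expanding the two sides on basis elements $x^{(a)}$ reduces the identity to a family of congruences modulo $2$ among the binomial coefficients ${a+b\choose a}$ that govern divided-power multiplication. These congruences are uniform in $m$ --- enlarging $m$ only widens the range of admissible exponents --- so I would establish them for all exponents by a Lucas-type argument, with the base cases $m=(1,1,1)$ and $m=(2,1,1)$ (already confirmed in GAP) fixing the few free normalisations. This is precisely what allows a single conjectured formula to work for every $m$.

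The step I expect to be the main obstacle is that the two natural gradings do not correspond. The Hamiltonian grading gives $\dim\mathfrak g_{-1}=4$, spanned by the linear functions $x_1,\dots,x_4$, with $\mathfrak g_0$ a symplectic algebra, whereas the special grading gives $\dim\mathfrak g_{-1}=3$ with $\mathfrak g_0$ of type $\mathfrak{sl}_3$; already $\dim\mathfrak g_0$ differs. Hence $\Phi$ cannot respect the standard gradings, and no off-the-shelf graded recognition theorem applies. Instead I would have to track how $\Phi$ intertwines the two filtrations and carefully control the characteristic-$2$ degeneracies --- the vanishing signs in the Poisson and curl formulas, the behaviour of the top divided powers, and the passage to the simple constituents --- rather than expand brackets blindly. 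Handling this grading mismatch in a structural way, so that the binomial identities of the previous paragraph organise themselves into a clean homomorphism statement, is where the real difficulty lies.
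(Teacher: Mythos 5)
The first thing to note is that the paper does not prove this statement at all: it is stated as a conjecture, supported only by GAP computations in the two cases $m=(1,1,1)$ and $m=(2,1,1)$, so there is no proof of the authors' to compare yours against, and a correct argument would be a genuine addition to the paper. Your proposal, however, is not that argument; it is a programme in which every decisive step is deferred. The map $\Phi$ is never actually defined (``built linearly from $g$, $h$ and their first derivatives'' is not a formula, and two GAP base cases cannot ``fix the normalisations'' of a formula that has not been written down), the bracket identity is reduced to ``a family of congruences'' that is never exhibited, let alone proved, and your final paragraph concedes that the central obstruction is unresolved. That obstruction is real and you state it correctly: in the standard gradings $H(4,\cdot)$ has $\dim\mathfrak{g}_{-1}=4$ with $\mathfrak{g}_0$ of dimension $10$, while $S(3,\cdot)$ has $\dim\mathfrak{g}_{-1}=3$ with $\mathfrak{g}_0$ of dimension $8$, so no graded isomorphism exists. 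But this means any candidate $\Phi$ must contain genuinely degree-mixing terms, and a ``natural, uniform'' recipe built from multiplication and the $D_i$ gives no indication of what those terms are; the proposal stops exactly where the proof would have to begin.

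There is also a step that concretely fails. You parametrise $S(3,m)$ by curls of two-component potentials $(F_1,F_2,0)\in\mathcal{O}\oplus\mathcal{O}$ and claim this yields a space matching $S(3,m)$ up to the usual two dimensions. It does not: the kernel of this map is $\{(F_1,F_2) : D_3F_1=D_3F_2=0,\ D_1F_2=D_2F_1\}$, which has dimension $2^{m_1+m_2}+1$, not $2$, so the image is a proper subspace of $S(3,m)$. For $m=(1,1,1)$, for instance, the kernel is $5$-dimensional and the image $11$-dimensional, whereas $\dim S(3,(1,1,1))=14$. To reach all of $S(3,m)$ you need all three potential components (the elements $D_{ij}(f)$, which span $S(3,m)$, are precisely curls of one-component potentials), and then the identification with $\mathcal{O}\oplus\mathcal{O}$ must pass through a quotient by a kernel of dimension $2^{m_1+m_2+m_3}+2$, which your bookkeeping does not account for. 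The sound parts of the proposal --- the dimension match $2\cdot 2^{m_1+m_2+m_3}-2$ on both sides, and the splitting $A(4,(m,1))=A(3,m)\oplus X_4A(3,m)$ forced by $m_4=1$ --- are a reasonable starting point, but they are the easy part; as written, the proposal neither proves the conjecture nor reduces it to a finite verification.
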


In the final section of this note, we consider the graded Lie algebras of
contact type $K$. Based on experimental evidence, we conjecture that their 
simple constituents are isomorphic to quotients of Lie algebras of type $H$.

%%%%%%%%%%%%%%%%%%%%%%%%%%%%%%%%%%%%%%%%%%%%%%%%%%%%%%%%%%%%%%%%%%%%%%%%%%%%%
\section{Preliminaries}

We briefly recall the definition of the polynomial algebras $A(n)$ and
$A(n,m)$ for $n \in \N$ and $m \in \N^n$. Let $\F$ be a field of 
characteristic $p > 0$ and let $X_1, \ldots, X_n$ be $n$ pairwise 
commuting indeterminates over $\F$. For $a \in \N^n$ we write $X^a$ 
for $X_1^{a_1} \cdots X_n^{a_n}$. Let $A(n)$ denote the commutative 
algebra consisting of all formal sums over $\F$ of the form 
$\sum_{a \in \N^n} \alpha_a X^{a}$ equipped with the usual addition and 
the multiplication
\[ (\sum_a \alpha_a X^a) (\sum_b \beta_b X^b) 
     = \sum_c (\sum_{a+b=c} \alpha_a \beta_b {c \choose a} ) X^c, \]
where the multi-binomial coefficient is evaluated modulo $p$ and thus 
is considered as an element in the prime field of $\F$. For $m \in \N^n$ 
let $\tau = (p^{m_1}-1,\ldots,p^{m_n}-1)$. Then $A(n, m) = \langle X^a 
\mid 0 \leq a \leq \tau \rangle \leq A(n)$ is a subalgebra of dimension 
$p^{m_1 + \cdots m_n}$. Let $D_j$ denote the partial derivation in $X_j$ 
on $A(n,m)$. 

%%%%%%%%%%%%%%%%%%%%%%%%%%%%%%%%%%%%%%%%%%%%%%%%%%%%%%%%%%%%%%%%%%%%%%%%%%%%%
\section{The Witt algebras}

The Witt algebra $W(n,m)$ is defined as the set of elements 
$\{ \sum_{j=1}^n f_j D_j \mid f_j \in A(n,m) \}$ equipped with 
the usual addition and the Lie bracket
\[ [ f D_i, g D_j ] = fD_i(g)D_j - g D_j(f) D_i + fg[D_i,D_j]. \]
The set $\{X^{(a)}D_i \mid 1\leq i\leq n, 0\leq a\leq \tau\}$ is a basis
for $W(n,m)$ and $W(n,m)$ has dimension $n \dim(A(n,m))$.

The algebra $W(1,(1))$ has dimension 2 and thus is solvable. In Theorem 
2.4 of Chapter 4 in \cite{SFa88} is shown that $W(n,m)$ is simple if 
$n > 1$. We consider the remaining cases in the following theorem.

\begin{theorem} \label{witt}
Let $char(\F) = 2$ and $W = W(1, m)$ with $m \neq (1)$. Then 
$\dim(W') = \dim(W)-1$ and $W'$ is simple.
\end{theorem}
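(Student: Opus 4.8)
The plan is to make the bracket completely explicit on the standard basis and then read off both assertions from elementary binary arithmetic. Write $e_a = X^{(a)} D$ for $0 \le a \le \tau$, where $\tau = 2^l - 1$ and $D = D_1$, so that $\dim W = 2^l$. Using the divided-power derivation $D(X^{(b)}) = X^{(b-1)}$ together with the multiplication $X^{(a)} X^{(b-1)} = {a+b-1 \choose a} X^{(a+b-1)}$, the bracket of the excerpt becomes $[e_a, e_b] = ({a+b-1 \choose a} - {a+b-1 \choose b}) e_{a+b-1}$. Over $\F$ of characteristic $2$ the sign is irrelevant and Pascal's identity collapses this to $[e_a, e_b] = {a+b \choose a}\, e_{a+b-1}$, where $e_j$ is read as $0$ whenever $j \notin \{0, \dots, \tau\}$. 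By Lucas' theorem the coefficient ${a+b \choose a}$ is nonzero in $\F$ exactly when the binary expansions of $a$ and $b$ have disjoint support, i.e. when the addition $a+b$ produces no carries.

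First I would pin down $W' = [W,W]$. Whenever ${a+b \choose a}$ is nonzero the sum $a+b$ has no carry, so $a+b \le \tau$ and the target index $a+b-1$ never reaches $\tau$; hence $W' \subseteq \langle e_0, \dots, e_{\tau-1}\rangle$, and $e_\tau = X^{(\tau)} D$ spans a complement in $W$. For the reverse inclusion, fix $c$ with $0 \le c \le \tau - 1$, split off the lowest set bit $2^j$ of $c+1$ and put $a = 2^j$, $b = c+1-2^j$; these have disjoint support, so $[e_a, e_b] = e_c$. Thus $W'$ has basis $e_0, \dots, e_{\tau-1}$ and $\dim W' = \dim W - 1$, as claimed.

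It remains to show $L := W'$ is simple. The key tool is $\ad e_0$, which by the formula satisfies $[e_0, e_b] = {b \choose 0} e_{b-1} = e_{b-1}$ for every $b$; thus $\ad e_0$ lowers the index by one and annihilates $e_0$. Given a nonzero ideal $I$ and $0 \neq x \in I$ whose largest basis index is $c$, applying $(\ad e_0)^c$ kills every term of $x$ except the leading one and produces $e_0 \in I$. Next, bracketing $e_0$ against the basis gives $[e_a, e_0] = {a \choose a} e_{a-1} = e_{a-1} \in I$ for all $a \le \tau - 1$, so $e_0, \dots, e_{\tau-2} \in I$.

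The only vector left to capture is the top element $e_{\tau-1}$, and this is the step I expect to be the main obstacle, since $\ad e_0$ only lowers indices and the raising relation above stops one short of the top. I would obtain it from a single chosen bracket: with $a = 2^{l-1}$ and $b = 2^{l-1}-1$ the supports are disjoint and $a+b = \tau$, so $[e_{2^{l-1}}, e_{2^{l-1}-1}] = e_{\tau-1}$; since $e_{2^{l-1}-1} \in I$ already and $e_{2^{l-1}} \in L$, this places $e_{\tau-1} \in I$. Therefore $I = L$ and $W'$ is simple. The remaining routine points are the characteristic-$2$ collapse of the bracket via Pascal's identity and the bookkeeping that $2^{l-1}$ and $2^{l-1}-1$ lie in the admissible range $\{0, \dots, \tau-1\}$, which holds precisely because $l > 1$.
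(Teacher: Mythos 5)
Your proof is correct, and its skeleton is the same as the paper's: fix the basis $e_a = X^{(a)}D_1$, compute the structure constants, collapse a nonzero element of an ideal $I$ onto $e_0$ by iterating $\ad e_0$, and then regenerate $W'$ from $e_0$. Two of your refinements go beyond what the paper writes down, and one of them supplies a step the paper actually elides. First, the paper leaves the coefficients in the form ${i+j-1 \choose i} - {i+j-1 \choose j}$ and merely asserts that ``the structure constants also imply'' $a_{2^l-1} \notin W'$; your Pascal--Lucas reformulation $[e_a,e_b] = {a+b \choose a}\, e_{a+b-1}$, nonzero exactly when $a$ and $b$ add without binary carries, gives a one-line reason (no carries forces $a+b \le \tau$, so the target index stays below $\tau$) and simultaneously exhibits every $e_c$ with $c \le \tau-1$ as a single bracket, pinning down $\dim W' = \dim W - 1$ cleanly. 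Second, and more substantively: after $e_0 \in I$ is obtained, the lowering relation $[e_0,e_a] = e_{a-1}$ may only be applied with $e_a \in W'$, i.e. $a \le 2^l-2$, so it places only $e_0, \ldots, e_{2^l-3}$ in $I$; the paper's final sentence nevertheless concludes $W' \le I$ by quoting this relation for $0 \le i \le 2^l-1$, which implicitly uses the bracket with $a_{2^l-1} \notin W'$ --- a bracket that the ideal property of $I$ in $W'$ does not license. You correctly identified the top basis vector $e_{\tau-1} = a_{2^l-2}$ as the obstacle and captured it with the legitimate bracket $[e_{2^{l-1}}, e_{2^{l-1}-1}] = e_{\tau-1}$ (disjoint supports, sum equal to $\tau$, both indices admissible since $l > 1$), with $e_{2^{l-1}-1}$ already in $I$. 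So your argument is not just a rederivation: it closes the one point where the paper's own proof is loose.
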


\begin{proof}
Let $m = (l)$ with $l \in \N, l \neq 1$. Let $a_i := X^{(i)}D_1$ for 
$0 \leq i \leq 2^l-1$. Then $\{a_i \mid 0 \leq i \leq 2^l-1\}$ is a 
basis for $W$. Let $c_{ijk}$ denote the corresponding structure 
constants. Then $c_{ijk} = 0$ if $k \neq i+j-1$ and otherwise
\[ c_{ijk} = {i+j-1\choose i} - {i+j-1\choose j}. \]
Hence $[a_0,a_i] = a_{i-1}$ for $0 \leq i \leq 2^l-1$ (with $a_{-1} := 0$). 
Thus $a_i \in W'$ for $0 \leq i \leq 2^l-2$ and the structure constants also 
imply that $a_{2^l-1} \not \in W'$. Thus $W'$ has codimension $1$ in $W$.

Suppose that $I$ is a non-zero ideal in $W'$. Let $x = \Sigma_{i=0}^{2^l-1} 
\lambda_i a_i$ be a non-zero element in $I$. Let $k$ be maximal with 
$\lambda_k \neq 0$.  Then
\[ (\mathrm{ad} a_0)^k (x) 
   = \Sigma_{i=0}^{k} \lambda_i a_{i-k} 
   = \lambda_k a_0.\]
Hence $a_0 = \lambda_k^{-1}(\mathrm{ad} a_0)^k (x) \in I$. As $[a_0,a_i]
= a_{i-1}$ for $0 \leq i \leq 2^l-1$, it follows that $W' = \langle a_0, 
\ldots 2^l-2 \rangle \leq I$ and thus $W' = I$. 
\end{proof}

%%%%%%%%%%%%%%%%%%%%%%%%%%%%%%%%%%%%%%%%%%%%%%%%%%%%%%%%%%%%%%%%%%%%%%%%%%%%%
\section{The special algebras}

Let $n > 1$ and $m \in \N^n$. We define
\[ div : W(n,m) \ra A(n,m) : \sum_{j=1}^n f_j D_j \ms \sum_{j=1}^n D_j(f_j). \]
Then the special Lie algebra $S(n, m)$ is the derived subalgebra of the 
kernel of $div$. Define $D_{ij} : A(n,m) \ra W(n,m) : f \ms D_j(f)D_i - 
D_i(f)D_j$. Then $S(n,m)$ is generated by $\{ D_{ij}(f) \mid 
f \in A(n,m), 0 \leq i < j \leq n\}$ and has the dimension 
$(n-1) (\dim(A(n,m))-1)$.

The algebra $S(2,(1,1))$ is solvable. The algebras $S(n,m)$ for $n > 2$
are simple as shown in \cite{SFa88}, Theorem 3.5 in Chapter 4. The 
remaining cases are considered in the following theorem. Note that 
$S(2,(m_1,m_2)) \cong S(2,(m_2,m_1))$ for every $m_1, m_2 \in \N$.

\begin{theorem} \label{special}
Let $char(\F) = 2$ and $S = S(2, m)$ with $m \neq (1,1)$.
\begin{items}
\item[\rm (a)] 
If $1 \not \in m$, then $\dim(S') = \dim(S) - 1$ and
$S'$ is simple.
\item[\rm (b)]
If $m = (1,m_2)$, then $S'/N(S) \cong W(1, (m_2))'$. 
\end{items}
\end{theorem}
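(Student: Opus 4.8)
The plan is to work throughout with the basis $e_a := D_{12}(X^{(a)})$ of $S = S(2,m)$, where $a$ ranges over $0 \le a \le \tau = (2^{m_1}-1, 2^{m_2}-1)$ with $a \neq 0$; since $\ker D_{12}$ consists of the constants, these $2^{m_1+m_2}-1$ elements form a basis. First I would record the structure constants: a direct computation from $[fD_i,gD_j]=fD_i(g)D_j-gD_j(f)D_i$ together with the divided-power rule $X^{(c)}X^{(d)}={c+d\choose c}X^{(c+d)}$ gives $[e_a,e_b]=\sigma(a,b)\,e_{a+b-\epsilon_1-\epsilon_2}$, where $\epsilon_1=(1,0)$, $\epsilon_2=(0,1)$, where $e_c:=0$ whenever $c=0$ or $c\not\le\tau$, and where, modulo $2$,
\[ \sigma(a,b)={a_1+b_1-1\choose a_1-1}{a_2+b_2-1\choose a_2}-{a_1+b_1-1\choose a_1}{a_2+b_2-1\choose a_2-1}. \]
In particular $x:=e_{(1,0)}$ and $y:=e_{(0,1)}$ satisfy $[x,e_c]=e_{c-\epsilon_2}$ and $[y,e_c]=e_{c-\epsilon_1}$, so $\ad x$ and $\ad y$ are commuting ``lowering'' operators. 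Assigning $e_c$ the degree $c_1+c_2-2$ turns $S$ into a graded Lie algebra with $S_{-1}=\langle x,y\rangle$ and a one-dimensional top piece $\langle e_\tau\rangle$.

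For part (a), where $m_1,m_2\ge 2$, I would first determine $S'$. By Lucas' theorem every coefficient ${2^{m}-1\choose k}$ is odd, so for $a+b=(2^{m_1},2^{m_2})=\tau+\epsilon_1+\epsilon_2$ (the only way to reach index $\tau$) both products in $\sigma(a,b)$ are congruent to $1$ modulo $2$ and cancel; hence $e_\tau\notin S'$. On the other hand the bracket $[e_{(1,1)},e_{(2^{m_1}-2,\,2^{m_2}-1)}]=e_{\tau-\epsilon_1}$ and its mirror give $e_{\tau-\epsilon_1},e_{\tau-\epsilon_2}\in S'$ (both have $\sigma\equiv 1$, using $m_1,m_2\ge 2$), and repeated application of $\ad x,\ad y$ to these two elements yields every $e_c$ with $c\neq\tau$. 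Thus $\dim S'=\dim S-1$ and $S'$ is the sum of all graded pieces below the top one, hence a graded subalgebra. For simplicity I would use that the grading is transitive: the identity ``$[x,u]=[y,u]=0\Rightarrow u=0$'' shows every nonzero homogeneous $u$ of degree $\ge 0$ is moved by $S_{-1}$. Given a nonzero ideal $I$, a nonzero $z\in I$ of minimal top degree must lie in $S_{-1}$ (otherwise bracketing its top component with a suitable $s\in S_{-1}$ produces a nonzero element of $I$ of strictly smaller top degree), so $I\cap S_{-1}\neq 0$. From $\alpha x+\beta y\in I$ the brackets $[\alpha x+\beta y,e_{(0,2)}]=\alpha y$ and $[\alpha x+\beta y,e_{(2,0)}]=\beta x$ produce one of $x,y$, and $[e_{(2,0)},y]=x$ the other; finally the raising brackets $[e_{c+\epsilon_2},x]=e_c$ and $[e_{c+\epsilon_1},y]=e_c$ sweep out all remaining $e_c$, the two indices $\tau-\epsilon_1,\tau-\epsilon_2$ not reached this way being supplied by the witness brackets above (whose factors both differ from $\tau$). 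Hence $I=S'$.

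For part (b), with $m=(1,m_2)$ and $m_2\ge 2$, the first coordinate is confined to $\{0,1\}$, and I would split $S=V_1\oplus V_0$ with $V_1=\langle e_{(1,j)}\rangle$ and $V_0=\langle e_{(0,j)}: j\ge 1\rangle$. The structure constants give $[V_0,V_0]=0$, $[V_1,V_0]\subseteq V_0$ and $[V_1,V_1]\subseteq V_1$, so $V_0$ is an abelian ideal. Matching $[e_{(1,i)},e_{(1,j)}]=\bigl({i+j-1\choose i}-{i+j-1\choose i-1}\bigr)e_{(1,i+j-1)}$ against the Witt structure constants identifies $V_1\cong W(1,(m_2))$ via $e_{(1,j)}\ms X^{(j)}D_1$. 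Consequently $S'=V_1'\oplus[V_1,V_0]$, where $V_1'\cong W(1,(m_2))'$ is simple of codimension $1$ in $V_1$ by Theorem \ref{witt}, and $[V_1,V_0]=S'\cap V_0$ is an abelian ideal of $S'$. Since $S'/[V_1,V_0]\cong V_1'$ is simple, $[V_1,V_0]$ is precisely the maximal nilpotent ideal $N(S)$, whence $S'/N(S)\cong W(1,(m_2))'$.

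The main obstacle is the binomial bookkeeping in part (a): showing that $\sigma$ vanishes on the single top pair yet can be made nonzero for the two witnesses $e_{\tau-\epsilon_1},e_{\tau-\epsilon_2}$ requires careful use of Lucas' theorem, and one must check that the descent via $\ad x,\ad y$ never passes through a vanishing index. In part (b) the delicate point is identifying $N(S)$ correctly, that is, confirming that $[V_1,V_0]$ exhausts the nilradical and that the induced map lands on the full derived algebra $W(1,(m_2))'$ rather than a proper subalgebra.
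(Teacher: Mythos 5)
Your proposal is correct, and in part (a) it takes a genuinely different route from the paper's. The paper works with the three-family basis $x_j = X^{(0,j)}D_1$, $y_i = X^{(i,0)}D_2$, $z_{ij} = X^{(i+1,j)}D_1 - X^{(i,j+1)}D_2$ (your $e_{(0,j+1)}$, $e_{(i+1,0)}$, $e_{(i+1,j+1)}$, up to sign), lists some ten bracket rules, and proves simplicity by hitting an arbitrary nonzero ideal element with lowering operators such as $(\ad y_0)^k$ and $\ad x_1(\ad x_0)^k$, split into cases according to which coefficients vanish; the case where some $z_{ij}$-coefficient is nonzero is explicitly left to the reader. Your uniform basis $e_a = D_{12}(X^{(a)})$ with a single structure constant $\sigma(a,b)$, combined with the grading/transitivity argument (an ideal element of minimal top degree must lie in $S_{-1}$, after which generation is a short sweep), eliminates that case analysis entirely and is complete where the paper is only sketched; the price is verifying $\sigma$ once and treating the two exceptional indices $\tau-\epsilon_1$, $\tau-\epsilon_2$ by hand. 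One small point of care there: in your witness bracket $[e_{(1,1)},e_{(2^{m_1}-2,2^{m_2}-1)}]$ the second factor is $e_{\tau-\epsilon_1}$ itself ($e_{(1,1)}$ acts as a degree operator), so for the ideal step you must invoke $e_{(1,1)}\in I$ (which your sweep supplies, since $(1,1)$ is not an exceptional index when $m_1,m_2\geq 2$) together with $e_{\tau-\epsilon_1}\in S'$, rather than membership of both factors in $I$; your parenthetical remark is consistent with this but worth making explicit. In part (b) the two arguments coincide up to packaging: the paper's explicit epimorphism $\phi(x_i)=0$, $\phi(y_0)=a_0$, $\phi(z_{0i})=a_{i+1}$ is exactly your projection of $S'=V_1'\oplus V_0$ onto $V_1'\cong W(1,(m_2))'$ along $V_0$. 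What your version buys is the structural statement that $W(1,(m_2))$ embeds in $S(2,(1,m_2))$ as a complement to the abelian ideal $V_0$, and your observation that simplicity of $S'/V_0$ forces $V_0 = N(S)$ actually justifies a step the paper merely asserts (it checks that the kernel of $\phi$ is an abelian ideal and then writes $S'/N(S)$ without identifying $N(S)$).
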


\begin{proof}
We first investigate $S = S(2,m)$ before we consider (a) and (b). Let 
$x_j = X^{(0,j)}D_1$ and $y_i = X^{(i,0)}D_2$ and $z_{ij} = X^{(i+1,j)}D_1 - 
X^{(i, j+1)}D_2$. Then $\{x_j, y_i, z_{ij} \mid  0\leq i\leq 2^{m_1}-2, 
0\leq j\leq 2^{m_2}-2\}$ is a basis for $S$. Define $x_{-1}, y_{-1} = 0$ 
and 
\[ \gamma_{ijkl} = {i+k+1\choose i+1}\left({j+l\choose j} + 
   {j+l\choose j-1}\right) - {i+k+1\choose i}\left({j+l\choose j+1} 
   + {j+l\choose j}\right). \] 
Then 
\begin{items}
\bulit
$[x_i,x_j] = 0$ for $0\leq i,j\leq 2^{m_2}-2$;
\bulit
$[y_i,y_j] = 0$ for $0\leq i,j\leq 2^{m_1}-2$;
\bulit
$[z_{ij},z_{kl}] = \gamma_{ijkl}z_{i+k,j+l}$ for $0\leq i,k\leq 2^{m_1}-2$
and $0\leq j,l\leq 2^{m_2}-2$;
\bulit
$[x_0,y_i] = y_{i-1}$ for $0\leq i\leq 2^{m_1}-2$;
\bulit
$[y_0,x_j] = x_{j-1}$ for $0\leq j\leq 2^{m_2}-2$;
\bulit
$[x_i,y_j] = z_{j-1,i-1}$ for $0<i\leq 2^{m_2}-2$ and $0<j\leq 2^{m_2}-2$;
\bulit
$[x_i,z_{0k}] = {i+k+1\choose i} x_{i+k}$ for $0\leq i,k\leq 2^{m_2}-2$;
\bulit
$[x_i,z_{jk}] = {i+k+1\choose i} z_{j-1,i+k}$ for 
          $0\leq i,k\leq 2^{m_2}-2$, $0<j\leq 2^{m_1}-2$;
\bulit
$[y_i,z_{j0}] = {i+j+1\choose i} y_{i+j}$ for
          $0\leq i,j\leq 2^{m_1}-2$;
\bulit
$[y_i,z_{jk}] = {i+j+1\choose i}z_{i+j,k-1}$ for 
          $0\leq i,j\leq 2^{m_1}-2$, $0<k\leq 2^{m_2}-2$.
\end{items}

These calculations imply that $\langle x_j, y_i z_{lk} \mid 
0\leq i,l\leq 2^{m_1}-2, 0\leq j,k\leq 2^{m_2}-2,(l,k) \neq 
(2^{m_1}-2, 2^{m_2}-2) \rangle \leq S'$. Thus $S'$ has codimension
at least 1 in $S$. A detailed inspection of the structure constants 
of $S$ further shows that $w := z_{2^{m_1}-2, 2^{m_2}-2} \not \in S'$ 
and thus $S'$ has codimension exactly $1$ in $S$.

(a) We consider the case $1 \not \in m = (m_1, m_2)$. Our aim is to show 
that $S'$ is simple. The proof is very similar to that for the Witt algebra, 
except that $a_{2^l-1}$ there is replaced by $w = z_{2^{m_1}-2, 2^{m_2}-2}$ 
here.  Let $I$ be a non-zero ideal of $S'$. Using the above commutators, it 
is sufficient to show that $x_0 \in I$ to obtain that $I = S'$. Let $v = 
\sum_{i=0}^{2^{m_2}-2} \lambda_i x_i + \sum_{j=0}^{2^{m_2}-2} \mu_j y_j 
+ \sum_{i=0}^{2^{m_1}-2} \sum_{i=0}^{2^{m_2}-2} \nu_{ij} z_{ij}$ be a 
non-zero element in $I$. 

Suppose first that all coefficients $\nu_{ij}$ are zero. If $\lambda_i
\neq 0$ for some $i$, then let $k$ be the greatest such that $\lambda_k
\neq 0$. Then 
\[ (\mathrm{ad} y_0)^k (v) 
    = (\mathrm{ad} y_0)^k (\sum_{i=0}^{k} \lambda_i x_i 
                         + \sum_{j=0}^{2^{m_1}-2} \mu_j y_j)
    = \sum_{i=0}^{k} \lambda_i x_{i-k} = \lambda_k x_0, \]
hence $x_0 = \lambda_k^{-1} \mathrm{ad} y_0 (v) \in I$. 
If $\lambda_i = 0$ for all $i$, then let $k$ be the greatest such that 
$y_k\neq 0$. Then 
\[ \mathrm{ad} x_1 (\mathrm{ad} x_0)^k (v) 
    = \mathrm{ad} x_1 (\mu_k y_0) 
    = \mu_k x_0, \] 
hence $x_0 = \mu_k^{-1}\mathrm{ad} x_1 (\mathrm{ad} x_0)^k (v)\in I$.
This proves the claim if all coefficients $\nu_{ij}$ are zero. The
other case that there is a non-zero $\nu_{ij}$ can be proved by similar
techniques; we leave this to the reader.

(b) Now we consider the case that $m = (1,m_2)$ with $m_2 \neq 1$. In this
case $S'$ has the basis $\{x_0, \ldots, x_{2^{m_2}-2}, y_0, z_{00}, \ldots, 
z_{0, 2^{m_2}-3}\}$. We define $\phi: S' \ra W(1, (m_2))'$ by $\phi(x_i) = 0$
and $\phi(y_0) = a_0$ and $\phi(z_{0i}) = a_{i+1}$. It is technical, but not
difficult to verify that this is an epimorphism of Lie algebras whose kernel
$\langle x_i \mid 0 \leq i \leq 2^{m_2}-2 \rangle$ is an abelian ideal in
$S$. Hence $S'/N(S) \cong W(1,(m_2))'$ is simple.
\end{proof}

%%%%%%%%%%%%%%%%%%%%%%%%%%%%%%%%%%%%%%%%%%%%%%%%%%%%%%%%%%%%%%%%%%%%%%%%%%%%%
\section{The Hamiltonian algebra}

We assume that $n \geq 2$ is even and set $n = 2r$.  For $1 \leq j \leq r$ 
let $\sigma(j) = 1$ and $j' = j+r$. For $r < j \leq n$ let $\sigma(j) = -1$ 
and $j' = j-r$. Define
\[ D_H : A(n,m) \ra W(n,m) : f \ms \sum_{j=1}^n \sigma(j) D_j(f) D_{j'}.\]
Then the kernel of $D_H$ is $\F 1$ and $H(n,m)$ is defined as the derived
subalgebra of the image of $D_H$. The algebra $H(n,m)$ is generated by 
the images $\{D_H(X^a) \mid 0 \leq a < \tau\}$ and has the dimension 
$\dim(A(n,m)) - 2$.

$H(2, (1,1))$ is solvable. The other cases are considered in the following 
theorem.

\begin{theorem} \label{hamilton}
Let $char(\F) = 2$ and $H = H(n, m)$ with $n = 2r$ even and $m \in \N^n$.
\begin{items}
\item[\rm (a)]
If $n > 2$, then $H$ is simple.
\item[\rm (b)]
If $n = 2$, then $H \cong S(2,m)'$.
\end{items}
\end{theorem}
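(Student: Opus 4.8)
The plan is to prove both parts by exhibiting explicit bases and structure constants, paralleling the approach used for the Witt and special algebras. For the Hamiltonian algebra, the key computational device is the bilinear form hidden in the definition of $D_H$: for $f, g \in A(n,m)$ one has the bracket formula $[D_H(f), D_H(g)] = D_H(\{f,g\})$, where $\{f,g\} = \sum_{j=1}^n \sigma(j) D_j(f) D_{j'}(g)$ is the Poisson bracket. First I would verify this Poisson-bracket formula and record that $D_H$ is a Lie algebra homomorphism from $(A(n,m), \{\cdot,\cdot\})$ onto the image, with one-dimensional kernel $\F 1$. This reduces everything to understanding the Poisson bracket on monomials $X^{(a)}$ modulo the constant.

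For part (a) with $n > 2$, I would set up the monomial basis $\{D_H(X^{(a)}) \mid 0 \leq a \leq \tau, \ X^{(a)} \neq 1\}$ of the image and compute $\{X^{(a)}, X^{(b)}\}$ explicitly in terms of binomial coefficients mod $2$. The strategy mirrors the simplicity proofs above: take a nonzero ideal $I$ of $H$, pick a nonzero element, and use repeated brackets with well-chosen low-degree generators to isolate a single basis element, then show the brackets propagate that element to all of $H$. The natural generators to bracket against are the images $D_H(X^{(e_j)})$ of the linear monomials, which act as raising/lowering operators on the degree multi-index; because $n > 2$ there are enough independent variables that the orbit of any single monomial under these operators spans the whole space, which is exactly what forces $I = H$. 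I would also need to confirm that the derived-subalgebra operation does not remove anything essential here, i.e.\ that the image of $D_H$ is already perfect when $n > 2$, so that $H$ equals the full image minus the constant and the simplicity argument applies to all of $H$.

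For part (b) with $n = 2$, the plan is to construct an explicit isomorphism $H(2,m) \cong S(2,m)'$. Since $r = 1$, we have $\sigma(1) = 1$, $\sigma(2) = -1$, $1' = 2$ and $2' = 1$, so $D_H(f) = D_2(f) D_1 - D_1(f) D_2$, which is precisely $-D_{12}(f)$ in the notation of the previous section. Because $S(2,m)$ is generated by the $D_{ij}(f)$ and $S(2,m)'$ is its derived algebra, I would match the generators $D_H(X^{(a)})$ of $H(2,m)$ directly with the generators $D_{12}(X^{(a)})$ of $S(2,m)'$ and check that the bracket relations coincide; the sign and the fact that both are derived subalgebras should make the correspondence an isomorphism on the nose, using the structure-constant list already computed for $S(2,m)$.

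The main obstacle will be part (a): unlike the one-variable Witt case, where a single $\ad a_0$ tower does all the work, here the ideal-absorption argument must handle a multi-index degree and several independent lowering operators simultaneously, and one must be careful that the binomial coefficients $\binom{a_j+b_j}{a_j} \bmod 2$ that appear in the Poisson bracket do not vanish at the crucial steps. Controlling these mod-$2$ vanishing patterns — choosing, for an arbitrary target element, a sequence of bracketings whose structure constants are all guaranteed nonzero — is the delicate point, and it is presumably where the condition $n > 2$ (giving extra variables to route around any vanishing coefficient) becomes essential.
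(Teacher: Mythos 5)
Your overall strategy matches the paper's. For (a), the paper runs exactly the ideal-absorption argument you sketch: bracket a nonzero element of the ideal against the images $h_{\varepsilon_j} := D_H(X^{(\varepsilon_j)})$ of the linear monomials, which act as lowering operators on the multi-index with structure constants $\pm 1$ (so your worry about vanishing binomial coefficients largely evaporates at this step --- the divided-power structure keeps the coefficients equal to $\pm 1$ whenever one factor has degree one); this isolates a single basis vector $h_{\varepsilon_k}$, which is then propagated to all of $H$. The hypothesis $n>2$ enters exactly where you predict: when $m_{k'}=1$ one needs a third variable $i \notin \{k,k'\}$ to reach $h_{\varepsilon_{k'}}$. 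For (b), the paper does precisely what you propose: in two variables $D_H = \pm D_{12}$, so $H(2,m)$ and $S(2,m)'$ are literally the same subalgebra of $W(2,m)$ (and in characteristic $2$ the sign discrepancy between your formula and the paper's is vacuous). Making the homomorphism property $[D_H(f),D_H(g)] = D_H(\{f,g\})$ explicit is a reasonable addition; the paper uses it silently in all its bracket computations.

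However, the one step you flag as ``to be confirmed'' is actually false, and as stated it would derail part (a): the image of $D_H$ is \emph{not} perfect for $n>2$. The image has dimension $\dim A(n,m) - 1$ (its kernel is $\F 1$), whereas its derived subalgebra $H$ has dimension $\dim A(n,m) - 2$: the element $h_\tau = D_H(X^{(\tau)})$ lies in the image but not in $H$. In particular $H$ is a proper nonzero ideal of the image, so the full image is not simple, and ``the simplicity argument applies to all of the image'' is exactly what you must not attempt to prove. Concretely, your propagation step would fail at $h_\tau$: raising into it would require bracketing with a monomial of multi-degree exceeding $\tau$, which does not exist in $A(n,m)$. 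The repair is simply to take $\{h_a \mid 0 < a < \tau\}$ (strict inequality on both sides) as the basis of $H$ and run the isolation/propagation argument inside that span, as the paper does: for $a \neq \tau - \varepsilon_i$ one raises via $[h_{\varepsilon_{j'}}, h_{a+\varepsilon_j}] = \pm h_a$, while the boundary cases $a = \tau - \varepsilon_i$ are reached separately via $[h_{\varepsilon_i + \varepsilon_{i'}}, h_a] = \pm (a_{i'} - a_i)\, h_a$, whose coefficient must be (and is) checked to be odd. With that correction, your plan coincides with the paper's proof.
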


\begin{proof}
(a) Let $h_a:=D_H(X^{(a)})$. Suppose that $I$ is a non-zero ideal of $H$, 
and let $w = \sum_{0 < a <\tau} \lambda_a h_a$ be a non-zero element of 
$I$.  Let $\Lambda(w) = \{a \mid \lambda_a\neq 0\}$. Then $w = 
\sum_{a\in \Lambda(w)} \lambda_a h_a$. Also let $\mu_i = 
\mathrm{max}\{a_i \mid a\in \Lambda(w)\}$ for $1\leq i\leq n$. Then at 
least one $\mu_i$ is non-zero, since $w\neq 0$ and so some (non-zero) 
vector is in $\Lambda(w)$. Let $k$ be minimal with $\mu_k\neq 0$. Now 
define $c_0 = \mu_k$ and
\[ c_i = \mathrm{max} \{a_{k+i} \mid a\in \Lambda(w), a_j=c_j \;
                                (k\leq j < k+i)\}. \]
Then the vector $v:= \sum_{i=0}^{n-k} c_i \varepsilon_i$ is in $\Lambda(w)$.

Now
\[ \ad h_{\varepsilon_{i'}}^j(h_a) = 
        \sigma(i))^j h_{a-j\varepsilon_{i}}. \]  
In particular, if $a_i<j$, then    
$(\ad h_{\varepsilon_{i'}})^j (h_a) = 0$.  Thus
\[ (\prod_{i=1}^{n-k} (\sigma(i))^{c_i} 
   (\ad h_{\varepsilon_{(k+i)'}})^{c_i})(w)=\lambda_v h_{c_0\varepsilon_k}, \]
since all $a\in \Lambda(w)$ other than $v$ have at least one $a_{k+i}<c_i$ 
for some $1\leq i\leq n-k$, hence the only term which is not taken to zero 
by this action is $\lambda_v h_v$.

Now
\[ (\ad h_{k'})^{c_0-1} (h_{c_0\varepsilon_k}) = 
         \sigma(k') h_{\varepsilon_k}, \] 
and thus $h_{\varepsilon_k}\in I$.

Next we show that all $h_{\varepsilon_i}$ are in $I$. First,
\[ [h_{\varepsilon_k}, h_{\varepsilon_{i}+\varepsilon_{k'}}] 
       = \sigma(k) h_{\varepsilon_{i}}\in I, \]
thus all $h_{\varepsilon_{i}}$ with $1\leq i\leq n$ are in $I$ provided 
that $h_{\varepsilon_{i}+\varepsilon_{k'}}<\tau$. This holds for all 
$i\neq k'$, and is also true for $i=k'$ provided $m_{k'}>1$. Suppose 
$m_{k'}=1$. Then, since $n>2$, there exists $1\leq i\leq n$ with $i 
\notin \{k, k'\}$. For such $i$ it follows that $h_{\varepsilon_i}\in I$
and $h_{\varepsilon_{k'+i'}}<\tau$ and:
\[ [h_{\varepsilon_i}, h_{\varepsilon_{k'+i'}}] 
           = \sigma(i) h_{\varepsilon_{k'}}\in I. \]
Hence all $h_{\varepsilon_{i}}$, $1\leq i\leq n$, are in $I$.

Finally, we show for arbitrary $0<a<\tau$ that $h_a\in I$. First suppose 
that $a\neq \tau - \varepsilon_i$ for some $1\leq i\leq n$. Then there 
exists $1\leq j\leq n$ such that $0<a+\varepsilon_j<\tau$, and
\[ [h_{\varepsilon_{j'}}, h_{a+\varepsilon_j}] 
     = \sigma(j') h_a\in I. \]  
For $1\leq i\leq n$ let $a= \tau - \varepsilon_i$. Note that 
$h_{\varepsilon_i+\varepsilon_i'}\in I$ since 
$h_{\varepsilon_i+\varepsilon_i'}\neq \tau - \varepsilon_j$ for any 
$1\leq j\leq n$. Now since
\[ [h_{\varepsilon_i+\varepsilon_{i'}}, h_a] 
      = \sigma(i) (a_{i'} - a_{i}) h_a
      = \sigma(i) (p^{m_i'} - 1 - (p^{m_i} - 2)) h_a 
      \equiv \sigma(i) h_a \;\mathrm{mod}\; p, \]
we see that $h_a\in I$.  

In summary, we obtain that $h_a\in I$ for all $0<a<\tau$, and thus we 
conclude that $I=H$. Hence $H$ is simple.
\medskip

(b) By construction, we observe that $D_H(X^{(a)}) = D_1(X^{(a)})D_2 
- D_2(X^{(a)})D_1$. 
Thus it follows that $\{D_H(X^{(a)}) \mid 0\leq a < \tau\} = \{D_{ij}(X^{(a)}) 
\mid 1\leq i < j\leq 2, 0\leq a \leq \tau\}$. The right hand side of this
equation is a basis for $S(2,m)'$ and the left hand side generates $H$.
\end{proof}

%%%%%%%%%%%%%%%%%%%%%%%%%%%%%%%%%%%%%%%%%%%%%%%%%%%%%%%%%%%%%%%%%%%%%%%%%%%%%
\section{The contact algebra}

We assume that $n \geq 3$ is odd and set $n = 2r+1$. For $f \in A(n,m)$ 
and $1 \leq j \leq n-1$ let $f_j = X^{\epsilon_j} D_n(f) + 
\sigma(j') D_{j'}(f)$. Further, let $f_n = 2f - \sum_{j=1}^{2r}
\sigma(j) X^{\epsilon_j} f_{j'}$. Then we define
\[ D_K : A(n,m) \ra W(n,m) : f \ms \sum_{j=1}^n f_j D_j. \]
Then $D_K$ is an injective linear map. We define $K(n,m)$ as the derived
subalgebra of the image of $D_K$. Hence $K(n,m)$ is generated by 
$\{D_K(X^a) \mid 0 \leq a \leq \tau \}$ and has dimension $\dim(A(n,m))$.

Based on computational evidence, we propose the following conjecture.

\begin{conj}
Let $char(\F) = 2$ and $K = K(n,m)$ for $n \in \N$ odd and $m \in \N^n$.
Then $K'/N(K')$ is simple and is isomorphic to $H/N(H)$, where $H = 
H(n-1, (m_1,\ldots,m_{n-1}))$.
\end{conj}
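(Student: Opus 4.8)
The plan is to reduce the conjecture to the already-established simplicity of the Hamiltonian constituents by exploiting the collapse of the contact form in characteristic $2$. Writing $\mathcal{E} = \sum_{j=1}^{n-1} X_j D_j$ for the Euler field in the first $n-1$ variables and using $2=0$ and $\sigma \equiv 1$, the first step is to simplify the defining data of $D_K$. A direct computation should give $f_j = X_j D_n(f) + D_{j'}(f)$ for $j \leq n-1$ and $f_n = \mathcal{E}(f)$, so that
\[ D_K(f) = D_H(f) + D_n(f)\,\mathcal{E} + \mathcal{E}(f)\, D_n, \]
where $D_H$ is the Hamiltonian operator on $(X_1,\ldots,X_{n-1})$. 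Since $D_K$ intertwines the Lie bracket with the contact bracket $\langle\,,\rangle$ on $A(n,m)$, the same specialisation should reduce the classical contact bracket to
\[ \langle f,g\rangle = \{f,g\} + \mathcal{E}(f)\,D_n(g) + D_n(f)\,\mathcal{E}(g), \]
with $\{f,g\} = \sum_{j=1}^{n-1} \sigma(j) D_j(f) D_{j'}(g)$ the Hamiltonian (Poisson) bracket. These two identities are the backbone of the argument.

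The decisive structural observation extracted from $\langle\,,\rangle$ is that the span $P$ of the monomials $X^a$ with $a_n = 0$ is a \emph{subalgebra}: if $f,g$ are free of $X_n$ then $D_n(f)=D_n(g)=0$ and $\langle f,g\rangle = \{f,g\}$ is again free of $X_n$. Under $D_K \leftrightarrow D_H$ this subalgebra is precisely the Poisson model of the Hamiltonian algebra, and its derived algebra $P'$ corresponds to $H(n-1,(m_1,\ldots,m_{n-1}))$. I would therefore aim to prove that $N(K')$ is an ideal complementary to $P'$, so that the projection $K' \ra P' = H$ induces $K'/N(K') \cong H/N(H)$. Concretely I would fix the basis $k_a = D_K(X^a)$, read off $K'$ and its codimension exactly as in Theorems~\ref{witt}--\ref{hamilton}, and use the contact grading $\deg X_i = 1$ $(i<n)$, $\deg X_n = 2$, which the bracket respects, to organise $\mathrm{Im}(D_K)$ into graded components.

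The main obstacle lies in pinning down $N(K')$ precisely. The naive guess --- the span of all $X_n$-involving $k_a$ --- is \emph{not} an ideal: the degree-lowering term $\mathcal{E}(f)D_n(g)$ shows that bracketing an $X_n$-free $f$ of odd $X_1,\ldots,X_{n-1}$-degree with an element of $X_n$-degree one produces an $X_n$-free contribution $\mathcal{E}(f)D_n(g)$. Thus $N(K')$ must be the maximal ideal contained in the $X_n$-involving span, and the crux of the proof is to describe it explicitly. Here I expect the relevant invariant to be not the $X_n$-degree alone but its interaction with the parity of the degree in the Hamiltonian variables, since $\mathcal{E}(X^a)$ vanishes exactly when $\sum_{i<n} a_i$ is even; filtering $\mathrm{Im}(D_K)$ by the contact grading and analysing the associated graded should make the ideal visible and allow a dimension count matching $\dim K' - \dim N(K')$ with $\dim H/N(H)$.

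Once $N(K')$ is identified and the projection onto $P' = H$ is shown to be a surjective homomorphism with kernel $N(K')$, simplicity of the quotient is inherited for free: for $n-1 > 2$ the target $H(n-1,(m_1,\ldots,m_{n-1}))$ is simple by Theorem~\ref{hamilton}(a), so $N(H)=0$ and $H/N(H)=H$; for $n=3$ the base case $H(2,m)\cong S(2,m)'$ of Theorem~\ref{hamilton}(b) reduces the statement to Theorem~\ref{special}. The conceptual content of the conjecture is therefore entirely contained in the characteristic-$2$ identities above together with the precise determination of the radical $N(K')$, which is the step I expect to require the most work.
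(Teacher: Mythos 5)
You should first be aware of how this statement sits in the paper: it is stated as a \emph{conjecture}, supported only by computational evidence, and the authors give no proof whatsoever. There is therefore no argument of theirs to compare yours against; your proposal has to stand entirely on its own, and as written it does not. Your preliminary reductions are sound and are almost certainly the right way to start: in characteristic $2$ one does get $f_n = \mathcal{E}(f)$ (the term $\sum_j \sigma(j) X^{\epsilon_j + \epsilon_{j'}} D_n(f)$ cancels in pairs), hence $D_K(f) = D_H(f) + D_n(f)\,\mathcal{E} + \mathcal{E}(f)\,D_n$, and the $X_n$-free polynomials do form a bracket-closed subspace $P$ on which the contact bracket degenerates to the Poisson bracket. (A bonus you did not note: these formulas give $D_K(1) = 2D_n = 0$, so in characteristic $2$ the map $D_K$ is \emph{not} injective, contrary to the general-characteristic claim in the paper; this is actually helpful, since it makes $D_K(P)$ isomorphic to the image of $D_H$ on the nose, with no central extension to quotient away.) The problem is that the entire content of the conjecture is then loaded onto the one step you explicitly defer: an explicit description of $N(K')$, a proof that it is an ideal of $K'$, that it is solvable (so that it really is the radical and the quotient really is $K'/N(K')$), and that $K' = P' \oplus N(K')$. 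You yourself demonstrate why the naive candidate --- the span of the $X_n$-involving basis vectors --- fails to be an ideal, and what you offer in its place is a heuristic (``filtering by the contact grading \ldots should make the ideal visible''), not a construction. A proof outline whose hardest step is marked ``expected to require the most work'' leaves the conjecture exactly as open as it was.

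Two further points need attention even within your outline. First, everything rests on the identity $[D_K(f), D_K(g)] = D_K(\langle f,g\rangle)$ with your simplified characteristic-$2$ contact bracket; this is standard in odd characteristic, but the usual derivations involve the scalar $2$ in essential ways (it appears in the very definition of $f_n$), so in characteristic $2$ it must be re-proved from the definitions before $P$ can be called a subalgebra of $\mathrm{Im}(D_K)$ rather than of an abstract bracket algebra. Second, you nowhere verify that $P'$ (i.e.\ the copy of $H(n-1,(m_1,\ldots,m_{n-1}))$) actually lies inside $K'$, the derived subalgebra of $K$, nor do you compute $\dim K'$; without these, even the dimension count you appeal to at the end cannot be set up. The one part of your argument that is genuinely free is the last one: if the complement structure were established with $P' \cong H$ simple (via Theorems \ref{special} and \ref{hamilton}), then any solvable ideal of $K'$ would project trivially to the simple quotient and hence lie in your complement, so the complement would automatically be $N(K')$. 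But that conditional observation is all that survives; the core of the conjecture remains unproved.
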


Note that $H/N(H) = H$ unless $n=3$ and $m_1=1$ (without loss of generality), 
in which case $H/N(H)\cong W(1,(m_2))'$. 

%%%%%%%%%%%%%%%%%%%%%%%%%%%%%%%%%%%%%%%%%%%%%%%%%%%%%%%%%%%%%%%%%%%%%%%%%%%%%
\section*{Acknowledgements}

This research was supported by a DAAD (German Academic Exchange Service) grant
under the program ``Research Grants for Doctoral Candidates and Young Academics 
and Scientists", which enabled the first author to visit the second author at 
TU Braunschweig from August 2006 - January 2007.

\def\cprime{$'$}

\end{document}